\newlength{\defbaselineskip}
\newcommand{\setlinespacing}[1]%
           {\setlength{\baselineskip}{#1 \defbaselineskip}}
\numberwithin{equation}{section}
\newtheorem{thm}{Theorem}[section]
\newtheorem{cor}[thm]{Corollary}
\newtheorem{prop}[thm]{Proposition}
\theoremstyle{definition}
\theoremstyle{remark}
\newtheorem{rem}[thm]{Remark}
\numberwithin{equation}{section}
\begin{document}

\title[Unique continuation]
{Unique continuation for the Schr\"odinger equation with gradient term}

\author{Youngwoo Koh and Ihyeok Seo}

\thanks{Y. Koh was supported by NRF Grant 2016R1D1A1B03932049 (Republic of Korea).
I. Seo was supported by the NRF grant funded by the Korea government(MSIP) (No. 2017R1C1B5017496).}
\subjclass[2010]{Primary: 35B60, 35B45; Secondary: 35Q40}
\keywords{Unique continuation, Carleman estimates, Schr\"odinger equation.}

\address{Department of Mathematics Education, Kongju National University, Kongju 32588, Republic of Korea}
\email{ywkoh@kongju.ac.kr}

\address{Department of Mathematics, Sungkyunkwan University, Suwon 16419, Republic of Korea}
\email{ihseo@skku.edu}

\maketitle


\begin{abstract}
We obtain a unique continuation result for the differential inequality
$| (i\partial_t +\Delta)u | \leq |Vu| + | W\cdot\nabla u |$
by establishing $L^2$ Carleman estimates.
Here, $V$ is a scalar function and $W$ is a vector function, which may be time-dependent or time-independent.
As a consequence, we give a similar result for the magnetic Schr\"odinger equation.
\end{abstract}


\section{Introduction}

Given a partial differential equation or inequality in $\mathbb{R}^n$, we say that it has
the unique continuation property from a non-empty open subset $\Omega\subset\mathbb{R}^n$
if its solution cannot vanish in $\Omega$ without being identically zero.
Historically, such property was studied in connection with the uniqueness of the Cauchy problem.
The major method for studying the property is based on so-called Carleman estimates which are weighted a priori estimates
for the solution.
The original idea goes back to Carleman \cite{C}, who first introduced it to obtain the property for the differential inequality
$|\Delta u|\leq|V(x)u|$ with $V\in L^\infty(\mathbb{R}^2)$
concerning the stationary Schr\"odinger equation.
Since then, the method has played a central role in almost all subsequent developments either for unbounded potentials $V$ or
fractional equations (see \cite{JK,St,KRS,CS,CR,S4,S5} and references therein).
For the equation involving gradient term,
\begin{equation}\label{gra}
|\Delta u|\leq|V(x)u|+|W(x)\cdot\nabla u|,
\end{equation}
see \cite{W,W2,LW} and references therein.

Now it can be asked whether the property is shared by the differential inequality
\begin{equation*}
|i\partial_tu +\Delta u|\leq|Vu|
\end{equation*}
concerning the (time-dependent) Schr\"odinger equation
which describes how the wave function $u$ of a non-relativistic quantum mechanical system with a potential $V$ evolves over time.
It would be an interesting problem to prove the property for this equation
since such property can be viewed as one of the non-localization properties of the wave function
which are an important issue in quantum mechanics.
The unique continuation for this time-dependent case has been studied for decades from a half-space $\Omega$ in $\mathbb{R}^{n+1}$
when the potential $V$ is time-dependent (\cite{KS,LS,S}) or time-independent (\cite{S3,S6}).
These results were based on Carleman estimates of the form
\begin{equation}\label{carl}
    \big\| e^{\beta\phi(x,t)} f \big\|_{\mathcal{B}}
    \leq C\big\| e^{\beta\phi(x,t)} (i\partial_t + \Delta) f \big\|_{\mathcal{B}'},
\end{equation}
where $\beta$ is a real parameter, $\phi$ is a suitable weight function,
and $\mathcal{B},\mathcal{B}'$ are suitable Banach spaces of functions on $\mathbb{R}^{n+1}$.

The new point in this paper is that we allow the gradient term $\nabla u$ in the differential inequality
\begin{equation}\label{Sch}
|i\partial_tu +\Delta u|\leq|Vu|+|W\cdot\nabla u|
\end{equation}
in the spirit of \eqref{gra}.
Here, $V$ is a scalar function and $W$ is a vector function, which may be time-dependent or time-independent.
From the physical point of view, a motivation behind this form comes also from the following magnetic Schr\"odinger equation
\begin{equation}\label{Sch2}
i\partial_tu +\Delta_{\vec{A}} u=Vu
\end{equation}
which describes the behavior of quantum mechanical systems in the presence of magnetic field.
Here, $\vec{A}=(A_1,...,A_n)$ is a magnetic vector potential
and $\Delta_{\vec{A}}$ denotes the magnetic Laplacian defined by
\begin{align*}
\Delta_{\vec{A}}&=\sum_j(\partial_j+iA_j)^2\\
&=\Delta+2i\vec{A}\cdot\nabla+i\textrm{div}\vec{A}-|\vec{A}|^2.
\end{align*}
Replacing $V$ and $W$ in \eqref{Sch} with $-i\textrm{div}\vec{A}+|\vec{A}|^2+V$ and $-2i\vec{A}$, respectively,
we can reduce the unique continuation problem for the magnetic case
to the one for the form \eqref{Sch}.

To obtain unique continuation for \eqref{Sch}, we find suitable weights $\phi$ and Banach spaces $\mathcal{B},\mathcal{B}'$
to allow the gradient term  in the left-hand side of \eqref{carl}, as follows
(see Proposition \ref{prop} for details):
\begin{equation*}
    \beta\big\| e^{\beta\phi(x,t)} f \big\|_{\mathcal{B}}+\beta^{1/2}\big\| e^{\beta\phi(x,t)}|\nabla f|\big\|_{\mathcal{B}}
    \leq C\big\| e^{\beta\phi(x,t)} (i\partial_t + \Delta) f \big\|_{\mathcal{B}'}
\end{equation*}
where $\phi(x,t)=ct+|x|^2$ and $\mathcal{B}=\mathcal{B}'=L_{x,t}^2$.
Making use of this estimate, we obtain the following unique continuation theorem
which says that if the solution of \eqref{Sch} is supported on inside of a paraboloid in $\mathbb{R}^{n+1}$,
then it must vanish on all of $\mathbb{R}^{n+1}$.

\begin{thm}\label{thm}
Let $V\in L^\infty$ and $|W|\in L^\infty$.
Suppose that $u\in H^1_t \cap H^2_x$ is a solution of \eqref{Sch} which vanishes in outside of a paraboloid given by
\begin{equation}\label{para}
\{(x,t)\in\mathbb{R}^{n+1}: c(t-t_0)+|x-x_0|^2 >0\},
\end{equation}
where $c\in\mathbb{R}\setminus\{0\}$ and $(x_0,t_0)\in\mathbb{R}^{n+1}$.
Then $u$ is identically zero in $\mathbb{R}^{n+1}$.
Here, $H_{t}^{1}$ denotes the space of functions whose derivatives up to order $1$,
with respect to the time variable $t$, belong to $L^2$.
$H_{x}^{2}$ is similarly defined as $H_{t}^{1}$.
\end{thm}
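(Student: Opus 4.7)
The plan is to invoke the Carleman estimate of Proposition~\ref{prop} with the weight $\phi(x,t)=ct+|x|^2$ dictated by the paraboloid in \eqref{para} and to absorb the $V$- and $W$-contributions into the left-hand side by taking $\beta$ large.

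First, I would translate coordinates so that $(x_0,t_0)=(0,0)$, reducing the hypothesis to $u\equiv 0$ on $\{\phi\le 0\}$ and the conclusion to $u\equiv 0$ throughout $\mathbb{R}^{n+1}$. I would then apply Proposition~\ref{prop} to $u$ and plug in \eqref{Sch} together with $V,|W|\in L^\infty$, obtaining
\begin{align*}
\beta\,\|e^{\beta\phi}u\|_{L^2}+\beta^{1/2}\,\|e^{\beta\phi}|\nabla u|\|_{L^2}
&\le C\,\|e^{\beta\phi}(i\partial_t+\Delta)u\|_{L^2}\\
&\le C\|V\|_{\infty}\|e^{\beta\phi}u\|_{L^2}+C\|W\|_{\infty}\|e^{\beta\phi}|\nabla u|\|_{L^2}.
\end{align*}
Choosing $\beta$ so large that $\beta\ge 2C\|V\|_{\infty}$ and $\beta^{1/2}\ge 2C\|W\|_{\infty}$ would let me absorb both right-hand terms into the left, giving $\|e^{\beta\phi}u\|_{L^2}=0$ and hence $u\equiv 0$.

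The main obstacle I anticipate is that the weighted norms above need not be finite: $u$ is only assumed to lie in $H^1_t\cap H^2_x$ and so carries no decay information, while $e^{\beta\phi}$ grows like $e^{\beta|x|^2}$ at spatial infinity. To circumvent this I would run the previous argument on a truncated version $\chi u$, where $\chi=\chi_{s,R}$ is a smooth cutoff adapted to a bounded piece of the slab $\{0<\phi\le s\}\cap\{|(x,t)|\le R\}$, and deal with the commutator $[\,i\partial_t+\Delta,\chi\,]u=i(\partial_t\chi)u+(\Delta\chi)u+2\nabla\chi\cdot\nabla u$ produced on the right-hand side. The piece of the commutator near $\{\phi\le 0\}$ contributes nothing since $u$ vanishes there, the piece near $\phi\sim s$ is absorbed by the $\beta$-coercivity, and the outer piece near $|(x,t)|\sim R$ must be shown to vanish in a suitable limit (first $R\to\infty$, then $\beta\to\infty$, then $s\to\infty$). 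Carrying out this approximation carefully is where I expect the technical work to lie; once $u$ is shown to vanish on each sublevel set $\{\phi\le s\}$, letting $s\to\infty$ covers all of $\mathbb{R}^{n+1}$ and completes the proof.
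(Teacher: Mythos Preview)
You have reversed the vanishing hypothesis. After translating to $(x_0,t_0)=(0,0)$, the assumption is that $u$ vanishes on $\{\phi>0\}$ (see the Remark following the theorem), not on $\{\phi\le 0\}$; the solution is supported \emph{inside} the paraboloid, where $\phi=ct+|x|^2\le 0$. This sign flip is precisely the source of your ``main obstacle'': once corrected, $e^{\beta\phi}\le 1$ on $\operatorname{supp}u$, and since $u\in H^1_t\cap H^2_x$ the weighted norms $\|e^{\beta\phi}u\|_{L^2}$, $\|e^{\beta\phi}|\nabla u|\|_{L^2}$, $\|e^{\beta\phi}(i\partial_t+\Delta)u\|_{L^2}$ are automatically finite. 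The cutoff in the $\phi$-direction that you propose, and the whole discussion of the commutator near $\phi\sim s$, is therefore unnecessary; your absorption argument goes through directly once the Carleman inequality has been transferred from $C_0^\infty$ to $u$.

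For that transfer the paper simply mollifies and multiplies by a cutoff $\phi_R$ in a large ball, sending $R\to\infty$ and then $\varepsilon\to0$; the commutator terms $(i\partial_t\phi_R)(u\ast\psi_\varepsilon)$, $(\Delta\phi_R)(u\ast\psi_\varepsilon)$, $2\nabla\phi_R\cdot(\nabla u\ast\psi_\varepsilon)$ vanish in the limit by dominated convergence because the weight is bounded on the (slightly enlarged) support of $u\ast\psi_\varepsilon$. After that, the paper actually runs an induction on strips $S=\{-1<\phi\le 0\}$, bounding the contribution from $\{\phi\le -1\}$ by $Ce^{-\beta}$ and letting $\beta\to\infty$. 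Your one-shot absorption---once the sign is corrected and finiteness is noted---is a legitimate shortcut over this strip-by-strip induction; both arrive at $u\equiv 0$.
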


\begin{rem}
The assumption $u\in H^1_t \cap H^2_x$ can be relaxed to
\begin{equation}\label{cond}
u,\partial_t u,\Delta u\in L_t^2(\mathbb{R};L_x^2(|x-x_0|^2\leq c(t_0-t)))
\end{equation}
because we are assuming $u=0$ on the set $\{(x,t)\in\mathbb{R}^{n+1}: c(t-t_0)+|x-x_0|^2 >0\}$.
Since $u(x,t)$ vanishes at infinity for each $t$, it follows from integration by parts that $\nabla u$ also satisfies \eqref{cond}.
\end{rem}

Regarding the condition \eqref{cond}, we give a remark that the solutions $u(x,t)=e^{it\Delta}u_0(x)$ to the free Schr\"odinger equation
with the initial data $u_0\in C_0^\infty(\mathbb{R}^n)$, $n>2$, satisfy \eqref{cond}.
Indeed, we consider the case $c<0$ without loss of generality. Then we may consider $t\geq t_0$ only.
Since $e^{it\Delta}u_0(x)\in L_x^2$ for each $t$, it is enough to show that
\begin{equation}\label{free}
\int_{M}^\infty\int_{|x-x_0|^2\leq c(t_0-t)}|e^{it\Delta}u_0(x)|^2dxdt<\infty
\end{equation}
for a sufficiently large $M>t_0$.
But, using the following well-known decay
$$\sup_{x\in\mathbb{R}^n}|e^{it\Delta}u_0(x)|\lesssim|t|^{-n/2}\|u_0\|_{L^1(\mathbb{R}^n)},$$
the integral in \eqref{free} is bounded by
\begin{align*}
C\|u_0\|_{L^1(\mathbb{R}^n)}^2\int_{M}^\infty (t-t_0)^{n/2}t^{-n}dt
&\leq C\|u_0\|_{L^1(\mathbb{R}^n)}^2\int_{M}^\infty t^{-n/2}dt\\
&\leq C\|u_0\|_{L^1(\mathbb{R}^n)}^2
\end{align*}
if $n>2$.
Since $\partial_te^{it\Delta}u_0=e^{it\Delta}i\Delta u_0$
and $\Delta e^{it\Delta}u_0=e^{it\Delta}\Delta u_0$,
the condition \eqref{cond} for these follows from the same argument with $\Delta u_0\in L^1$.

As mentioned above, the theorem directly implies the following result for the magnetic case \eqref{Sch2}.

\begin{cor}
Let $V\in L^\infty$, $|\vec{A}|\in L^\infty$ and $\textrm{div}\vec{A}\in L^\infty$.
If\, $u\in H^1_t \cap H^2_x$ is a solution of \eqref{Sch2} which vanishes in outside of a paraboloid given by \eqref{para},
then $u$ is identically zero in $\mathbb{R}^{n+1}$.
\end{cor}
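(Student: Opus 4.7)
The plan is to reduce the magnetic equation \eqref{Sch2} to the differential inequality \eqref{Sch} already treated by Theorem \ref{thm}, and then simply quote that theorem. The expansion of the magnetic Laplacian recorded in the introduction,
$$\Delta_{\vec{A}}=\Delta+2i\vec{A}\cdot\nabla+i\,\textrm{div}\vec{A}-|\vec{A}|^2,$$
makes this reduction essentially automatic.

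First I would rewrite $i\partial_t u + \Delta_{\vec{A}} u = Vu$ in the equivalent form
$$i\partial_t u + \Delta u = \widetilde{V}\,u + \widetilde{W}\cdot\nabla u,\qquad \widetilde{V}:=V+|\vec{A}|^2-i\,\textrm{div}\vec{A},\qquad \widetilde{W}:=-2i\vec{A},$$
which is exactly the substitution indicated in the paragraph following \eqref{Sch2}. Taking absolute values yields $|i\partial_t u+\Delta u|\leq|\widetilde{V}u|+|\widetilde{W}\cdot\nabla u|$, i.e.\ \eqref{Sch} with $V,W$ replaced by $\widetilde{V},\widetilde{W}$.

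Next I would verify the hypotheses of Theorem \ref{thm} for this reduced inequality. The assumptions $V\in L^\infty$, $|\vec{A}|\in L^\infty$, and $\textrm{div}\vec{A}\in L^\infty$ give $\widetilde{V}\in L^\infty$ (since $|\vec{A}|^2\in L^\infty$) and $|\widetilde{W}|=2|\vec{A}|\in L^\infty$. The solution $u$ is assumed to belong to $H^1_t\cap H^2_x$ and to vanish outside the paraboloid \eqref{para}, so these conditions transfer directly. Theorem \ref{thm} then forces $u\equiv 0$ on $\mathbb{R}^{n+1}$.

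There is essentially no obstacle here; the only point worth flagging is that $\textrm{div}\vec{A}$ must be interpreted in a sense compatible with the regularity of $\vec{A}$, but the hypothesis $\textrm{div}\vec{A}\in L^\infty$ already supplies this, so $\widetilde{V}$ is a bona fide bounded function and Theorem \ref{thm} applies without modification. Thus the corollary is a direct specialization of the main theorem.
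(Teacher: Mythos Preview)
Your proposal is correct and is exactly the argument the paper intends: the corollary is stated without a separate proof, the authors having already noted in the introduction that the substitution $V\mapsto -i\,\textrm{div}\vec{A}+|\vec{A}|^2+V$, $W\mapsto -2i\vec{A}$ reduces \eqref{Sch2} to \eqref{Sch}, after which Theorem \ref{thm} applies directly. There is nothing to add.
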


There are related results for Schr\"odinger equations
which describe the behavior of the solutions at two different times which
ensure $u\equiv0$.
Since the Schr\"odinger equation is time reversible,
it seems natural to consider this type of unique continuation from the behavior at two different time moments.
Such results have been first obtained by various authors (\cite{Z,KPV,IK,EKPV,S2})
for Schr\"odinger equations of the form
\begin{equation}\label{equ}
(i\partial_t+\Delta)u=Vu+F(u,\overline{u}),
\end{equation}
where $V(x,t)$ is a time-dependent potential and $F$ is a nonlinear term.

For the $1-D$ cubic Schr\"odinger equations, i.e., $V\equiv0$, $F=\pm|u|^2u$, $n=1$ in \eqref{equ},
Zhang \cite{Z} showed that if $u=0$ in the same semi-line at two times $t_0,t_1$, then $u\equiv0$ in $\mathbb{R}\times[t_0,t_1]$.
His proof is based on the inverse scattering theory.
In the work of Kenig-Ponce-Vega \cite{KPV}, this result was completely extended to higher dimensions under the assumption that
$u=0$ in the complement of a cone with opening $<\pi$ at two times.
Key steps in their proof were energy estimates for the Fourier transform of the solution and the use of Isakov's results \cite{I} on local unique continuation.
The size of the set on which the solution vanishes at two times was improved by Ionescu and Kenig \cite{IK}
to the case of semispaces, i.e., cones with opening $=\pi$.
Latter, Escauriaza-Kenig-Ponce-Vega \cite{EKPV}\footnote{This approach was motivated by a deep relationship between the unique continuation and uncertainty principles for the Fourier transform.
As a consequence, several remarkable results have been later obtained in both cases linear Schr\"odinger equations with variable coefficients
and nonlinear ones (see, for example, \cite{BFGRV} and references therein). See also a good survey paper \cite{EKPV2} explaining several results in the subject.} showed that it suffices to assume that the solution decay sufficiently fast at two times to have unique continuation results.

These results were extended in \cite{IK2,DS} to the case where the nonlinear term $F$ in \eqref{equ} involves the gradient terms $\nabla u,\nabla\overline{u}$.
More precisely, it was shown in \cite{IK2} that if the solution vanishes in the complement of a ball at two times $t_0,t_1$, then $u\equiv0$ in $\mathbb{R}^n\times[t_0,t_1]$.
In the spirit of \cite{EKPV}, this support condition at two times was improved in \cite{DS} to the assumption
that the solution decay sufficiently fast.
By applying these works to each time interval $[n,n+1]$, $n\in\mathbb{Z}$,
particularly for the linear equation
\begin{equation}\label{er}
i\partial_tu +\Delta u=Vu+W\cdot\nabla u,
\end{equation}
one can show that if the solution to \eqref{er} vanishes in outside of a paraboloid given by \eqref{para},
then $u\equiv0$ under the assumption that
\begin{equation}\label{pott}
V\in B_x^{2,\infty}L_t^\infty(\mathbb{R}^{n+1}),\quad |W|\in B_x^{1,\infty}L_t^\infty(\mathbb{R}^{n+1}),
\end{equation}
where $B^{p,q}$, $1\leq p,q\leq\infty$, are Banach spaces with the properties that
$B^{p,p}=L^p$, $1\leq p\leq\infty$, and
$B^{p_1,q_1}\hookrightarrow B^{p_2,q_2}$
if $q_1\geq q_2$ and $p_1\leq p_2$. (See \cite{IK2,DS} for details.)
But in this paper, the assumption \eqref{pott} is improved because
$$B_x^{p,\infty}L_t^\infty(\mathbb{R}^{n+1})\hookrightarrow B_x^{\infty,\infty}L_t^\infty(\mathbb{R}^{n+1})=L^\infty(\mathbb{R}^{n+1})$$
for $p=1,2$.

\section{$L^2$ Carleman estimate}\label{sec2}

In this section we obtain the following $L^2$ Carleman estimate which is a key ingredient for the proof of Theorem \ref{thm} in the next section.

\begin{prop}\label{prop}
Let $\beta>0$ and $c\in\mathbb{R}$.
Then we have for $f\in C_0^\infty(\mathbb{R}^{n+1})$
    \begin{equation}\label{key_eq_2}
    \beta \big\| e^{\beta(ct+|x|^2)} f \big\|_{L^2_{x,t}}
        + \beta^{\frac{1}{2}} \big\| e^{\beta(ct +|x|^2)} |\nabla f|\big \|_{L^2_{x,t}}
    \leq \big\| e^{\beta(ct +|x|^2)} (i\partial_t + \Delta) f \big\|_{L^2_{x,t}}.
    \end{equation}
\end{prop}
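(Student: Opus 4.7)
The plan is to conjugate $i\partial_t + \Delta$ with the weight $e^{-\beta\phi}$ where $\phi(x,t) = ct + |x|^2$ and then analyze the resulting operator via a self-adjoint / skew-adjoint decomposition. Setting $g = e^{\beta\phi}f$, a direct computation using $\nabla\phi = 2x$, $\Delta\phi = 2n$, and $\partial_t\phi = c$ gives
$$L_\beta g \;:=\; e^{\beta\phi}(i\partial_t + \Delta)(e^{-\beta\phi}g) \;=\; i\partial_t g + \Delta g - 4\beta\, x\cdot\nabla g + \bigl(4\beta^2|x|^2 - 2n\beta - i\beta c\bigr)g.$$

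I would then split $L_\beta = S + A$ into its $L^2_{x,t}$ symmetric and antisymmetric parts,
$$S = i\partial_t + \Delta + 4\beta^2|x|^2, \qquad A = -4\beta\, x\cdot\nabla - 2n\beta - i\beta c,$$
so that $\|L_\beta g\|^2 = \|Sg\|^2 + \|Ag\|^2 + \langle [S,A]g,\,g\rangle$. Using $[\Delta,\, x\cdot\nabla] = 2\Delta$ and $[|x|^2,\, x\cdot\nabla] = -2|x|^2$ (and that $i\partial_t$ commutes with the purely spatial piece of $A$), the commutator simplifies to $[S,A] = -8\beta\Delta + 32\beta^3|x|^2$, which after integration by parts yields the positivity estimate
$$\|L_\beta g\|_{L^2_{x,t}}^2 \;\geq\; 8\beta\,\|\nabla g\|_{L^2_{x,t}}^2 + 32\beta^3\,\||x|g\|_{L^2_{x,t}}^2.$$

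To recover the $L^2$-term $\beta^2\|g\|^2$, which is not directly visible in the commutator, I would use the divergence identity $\nabla\cdot(x|g|^2) = n|g|^2 + 2\,\textrm{Re}(\bar g\, x\cdot\nabla g)$; integrating this and applying Cauchy--Schwarz gives the Hardy-type bound $n\|g\|^2 \leq 2\|\nabla g\|\,\||x|g\|$, which combined with AM--GM on the previous display produces $\|L_\beta g\|^2 \geq 16n\beta^2\|g\|^2$. To convert back to $f$, note that $e^{\beta\phi}\nabla f = \nabla g - 2\beta x g$, and the same divergence identity gives
$$\|e^{\beta\phi}|\nabla f|\|^2 = \|\nabla g\|^2 + 2n\beta\|g\|^2 + 4\beta^2\||x|g\|^2,$$
so that each of the three summands in $\beta\|e^{\beta\phi}|\nabla f|\|^2$ is dominated by $\tfrac{1}{8}\|L_\beta g\|^2$; summing up and taking square roots yields the asserted inequality with coefficient $1$.

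The main obstacle is not the computation itself, which is mechanical once the decomposition is in hand, but rather the bookkeeping needed to land on the sharp unit coefficient claimed in the statement: the commutator $[S,A]$ provides control only on the weighted quantities $\|\nabla g\|$ and $\||x|g\|$, so the Hardy-type inequality above is essential in order to bridge to the unweighted $\beta\|e^{\beta\phi}f\|$ term. A convenient feature is that $\partial_t\phi = c$ contributes only the purely imaginary zeroth-order constant $-i\beta c$, which lands entirely in $A$ and drops out of $[S,A]$, so the estimate is uniform in $c$ and in particular independent of its sign.
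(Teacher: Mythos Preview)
Your approach is essentially identical to the paper's: the same conjugation $g=e^{\beta\phi}f$, the same symmetric/antisymmetric split $L_\beta=S+A$, the same commutator $[S,A]=-8\beta\Delta+32\beta^3|x|^2$, and the same Heisenberg-type inequality $\tfrac{n}{2}\|g\|^2\le\|\nabla g\|\,\||x|g\|$ to recover the missing $\beta^2\|g\|^2$ term. The only cosmetic difference is in the final bookkeeping: the paper bounds $\|e^{\beta\phi}|\nabla f|\|\le 2\beta\||x|g\|+\|\nabla g\|$ via the triangle inequality and then uses $\sqrt{3(a^2+b^2+c^2)}\ge|a|+|b|+|c|$, whereas you expand $\|e^{\beta\phi}|\nabla f|\|^2$ exactly and dominate each summand by $\tfrac18\|L_\beta g\|^2$; both routes close with constant $\le 1$.
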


\begin{proof}
To show \eqref{key_eq_2}, we first set
$f= e^{-\beta(ct+|x|^2)}g$
and note that
    $$
    \begin{aligned}
    \big\| e^{\beta(ct +|x|^2)}|\nabla f| \big\|_{L^2_{x,t}}
    &= \big\| e^{\beta(ct +|x|^2)} | e^{-\beta(ct +|x|^2)} (-2\beta gx + \nabla g) | \big\|_{L^2_{x,t}} \\
    &\leq 2\beta \big\| |x| g \big\|_{L^2_{x,t}} + \big\| |\nabla g|\big\|_{L^2_{x,t}}.
    \end{aligned}
    $$
Hence it is enough to show that
   \begin{equation}\label{key_eq_3}
    \begin{aligned}
   \big\| e^{\beta(ct +|x|^2)} (i\partial_t + \Delta) &e^{-\beta(ct +|x|^2)}g \big\|_{L^2_{x,t}}\\
&\geq \beta \| g \|_{L^2_{x,t}}+ 2\beta^{\frac{3}{2}} \big\| |x| g \big\|_{L^2_{x,t}}
        + \beta^{\frac{1}{2}} \| |\nabla g| \|_{L^2_{x,t}}
    \end{aligned}
    \end{equation}
for $g\in C_0^\infty(\mathbb{R}^{n+1})$.

By direct calculation, we see that
$$e^{\beta(ct +|x|^2)}(i\partial_t + \Delta) e^{-\beta(ct +|x|^2)} g
    = ( i\partial_t + \Delta
        + 4\beta^2|x|^2 - 2n\beta - 4\beta x\cdot\nabla - i\beta c) g.$$
Let $A:= i\partial_t + \Delta + 4\beta^2|x|^2$
and
$B:= 2n\beta + 4\beta x\cdot\nabla_x + i\beta c$,
such that $A^*=A$ and $B^*=-B$.
(Here $A^*$ and $B^*$ denote adjoint operators.)
Then we get
    $$
    \begin{aligned}
    \big\| e^{\beta(ct +|x|^2)} (i\partial_t + \Delta)&e^{-\beta(ct +|x|^2)}g\big\|_{L^2_{x,t}}^2\\
    &=\big\langle (A-B)g, (A-B)g \big\rangle_{L^2_{x,t}}\\
    &=\big\langle Ag, Ag \big\rangle_{L^2_{x,t}}
    +\big\langle Bg, Bg \big\rangle_{L^2_{x,t}}
    +\big\langle (BA-AB)g, g \big\rangle_{L^2_{x,t}}\\
    &\geq\big\langle (BA-AB)g, g \big\rangle_{L^2_{x,t}}.
    \end{aligned}
    $$
Since the only terms in $A$ and $B$ which are not commutative each other are $4\beta x\cdot\nabla$ and $(\Delta + 4\beta^2|x|^2)$,
we see
    $$
    \begin{aligned}
    BA-AB
    &= (4\beta x\cdot\nabla)(\Delta + 4\beta^2|x|^2) - (\Delta + 4\beta^2|x|^2)(4\beta x\cdot\nabla) \\
    &= 32 \beta^3 |x|^2 - 8\beta\Delta.
    \end{aligned}
    $$
Hence it follows that
    \begin{equation}\label{lem_BA-AB}
    \begin{aligned}
    \big\langle (BA-AB)g, g \big\rangle_{L^2_{x,t}}
    &= 32\beta^3 \big\langle |x|^2 g, g \big\rangle_{L^2_{x,t}}
        + 8\beta \big\langle -\Delta g, g \big\rangle_{L^2_{x,t}}\\
    &= 32 \beta^3 \big\| |x| g \big\|_{L^2_{x,t}}^2 + 8\beta \big\| |\nabla g| \big\|_{L^2_{x,t}}^2 .
    \end{aligned}
    \end{equation}

Next we notice that
$$-\sum_{i=1}^n\frac12x_i\frac{\partial}{\partial x_i}|g(x,t)|^2=-\textrm{Re}(\nabla g\cdot \overline{xg}).$$
By integrating this over $\mathbb{R}^{n+1}$ and then integrating by parts on the left-hand side, we see
\begin{align*}
\frac n2\|g\|_{L_{x,t}^2}^2&=-\textrm{Re}\bigg(\int_{\mathbb{R}}\int_{\mathbb{R}^n}\nabla g\cdot \overline{xg}dxdt\bigg)\\
&\leq\bigg|\int_{\mathbb{R}}\int_{\mathbb{R}^n}\nabla g\cdot \overline{xg}dxdt\bigg|\\
&\leq\big\||\nabla g|\big\|_{L_{x,t}^2}\big\||x|g\big\|_{L_{x,t}^2}.
\end{align*}
Hence, by combining this inequality\footnote{If we consider $L_x^2$ instead of $L_{x,t}^2$,
this inequality is indeed the Heisenberg uncertainty principle in $n$ dimensions
($\||\nabla g|\|_{L_x^2}=2\pi\||\xi|\widehat{g}\|_{L_{\xi}^2}$).} and \eqref{lem_BA-AB},
we conclude that
$$\big\langle (BA-AB)g, g \big\rangle_{L^2_{x,t}}
\geq (31\beta^3)\big\| |x| g \big\|_{L^2_{x,t}}^2 + (7\beta) \big\| |\nabla g| \big\|_{L^2_{x,t}}^2 +n\beta^2\|g\|_{L_{x,t}^2}^2,$$
since
$$2\beta^2\big\||\nabla g|\big\|_{L_{x,t}^2}\big\||x|g\big\|_{L_{x,t}^2}
\leq\beta^3\big\||\nabla g|\big\|_{L_{x,t}^2}^2+\beta\big\||x|g\big\|_{L_{x,t}^2}^2.
$$

This implies \eqref{key_eq_3} because of $\sqrt{3}(a^2+b^2+c^2)^{1/2}\geq|a|+|b|+|c|$.
Indeed,
\begin{align*}
\big\| e^{\beta(ct +|x|^2)} (i\partial_t + \Delta)&e^{-\beta(ct +|x|^2)}g\big\|_{L^2_{x,t}}\\
    &=\big\langle (A-B)g, (A-B)g \big\rangle_{L^2_{x,t}}^{1/2}\\
    &\geq \Big(31\beta^3\big\| |x| g \big\|_{L^2_{x,t}}^2 + 7\beta\big\| |\nabla g| \big\|_{L^2_{x,t}}^2 +n\beta^2\|g\|_{L_{x,t}^2}^2\Big)^{1/2}\\
    &\geq\frac1{\sqrt{3}}\big(\sqrt{31\beta^3} \big\| |x| g \big\|_{L^2_{x,t}} + \sqrt{7\beta} \big\| |\nabla g| \big\|_{L^2_{x,t}} + \sqrt{n\beta^2} \| g \|_{L^2_{x,t}}\big)\\
    &\geq \beta \| g \|_{L^2_{x,t}}+ 2\beta^{\frac{3}{2}} \big\| |x| g \big\|_{L^2_{x,t}}
        + \beta^{\frac{1}{2}} \| |\nabla g| \|_{L^2_{x,t}}.
\end{align*}

\end{proof}

\section{Proof of Theorem \ref{thm}}\label{sec3}

This section is devoted to proving Theorem \ref{thm} using Proposition \ref{prop}.

By translation we may first assume that $(x_0,t_0)=(0,0)$ so that
the solution $u$ vanishes in the paraboloid $\{(x,t)\in\mathbb{R}^{n+1}: ct+|x|^2 >0 \}$.
Now, from induction it suffices to show that $u=0$ in the following set
\begin{equation*}
S=\{(x,t)\in\mathbb{R}^{n+1}:-1<ct+|x|^2\leq0\}.
\end{equation*}
To show this, we make use of the Carleman estimate in Proposition \ref{prop}.

Let $\psi:\mathbb{R}^{n+1}\rightarrow[0,\infty)$ be a
smooth function such that $\text{supp}\,\psi\subset B(0,1)$ and
$$\int_{\mathbb{R}^{n+1}}\psi(x,t)dxdt=1.$$
Also, let $\phi:\mathbb{R}^{n+1}\rightarrow[0,1]$ be a smooth
function such that $\phi=1$ in $B(0,1)$ and $\phi=0$ in $\mathbb{R}^{n+1}\setminus B(0,2)$.
For $0<\varepsilon<1$ and $R\geq1$, we set
$\psi_\varepsilon(x,t)=\varepsilon^{-(n+1)}\psi(x/\varepsilon,t/\varepsilon)$
and $\phi_R(x,t)=\phi(x/R,t/R)$.

Now we apply the Carleman estimate \eqref{key_eq_2} to the following $C_0^\infty$ function
\begin{equation*}
v(x,t)=(u\ast\psi_\varepsilon)(x,t)\phi_R(x,t).
\end{equation*}
Then, we see that
 \begin{equation*}
    \beta \big\| e^{\beta(ct+|x|^2)} v \big\|_{L^2_{x,t}}
        + \beta^{\frac{1}{2}} \big\| e^{\beta(ct +|x|^2)} |\nabla v|\big \|_{L^2_{x,t}}
    \leq \big\| e^{\beta(ct +|x|^2)} (i\partial_t + \Delta) v \big\|_{L^2_{x,t}}.
    \end{equation*}
Note that
    $$
    \nabla v= ( \nabla u \ast \psi_{\epsilon}) \phi_R
    + ( u \ast \psi_{\epsilon}) \nabla\phi_R
    $$
and
    $$(i\partial_t + \Delta)v
    = \big( (i\partial_t + \Delta)u \ast \psi_{\epsilon} \big)\phi_R
        + (u \ast \psi_{\epsilon}) (i\partial_t + \Delta) \phi_R
        + 2(\nabla u \ast \psi_{\epsilon}) \cdot \nabla \phi_R.$$
Since $v$ is supported in $\{(x,t)\in\mathbb{R}^{n+1}: ct+|x|^2 \leq \varepsilon \}$
and we are assuming $u\in H^1_t \cap H^2_x$, by letting $R\rightarrow\infty$, we get
\begin{align*}
    \beta \big\| e^{\beta(ct+|x|^2)}(u\ast\psi_\varepsilon) \big\|_{L^2_{x,t}}
        + \beta^{\frac{1}{2}} \big\| e^{\beta(ct +|x|^2)} &|\nabla u\ast\psi_\varepsilon|\big \|_{L^2_{x,t}}\\
    &\leq \big\| e^{\beta(ct +|x|^2)} (i\partial_t + \Delta)(u\ast\psi_\varepsilon) \big\|_{L^2_{x,t}}.
    \end{align*}
Again by letting $\varepsilon\rightarrow0$,
\begin{equation*}
    \beta \big\| e^{\beta(ct+|x|^2)}u \big\|_{L^2_{x,t}}
        + \beta^{\frac{1}{2}} \big\| e^{\beta(ct +|x|^2)} |\nabla u|\big \|_{L^2_{x,t}}
    \leq \big\| e^{\beta(ct +|x|^2)} (i\partial_t + \Delta)u \big\|_{L^2_{x,t}}.
    \end{equation*}
Hence it follows that
    $$
    \begin{aligned}
    \beta \big\| e^{\beta(ct+|x|^2)} u &\big\|_{L^2_{x,t}(S)}
        + \beta^{\frac{1}{2}}\big\| e^{\beta(ct +|x|^2)} |\nabla u| \big\|_{L^2_{x,t}(S)} \\
    \leq &\big\| e^{\beta(ct +|x|^2)} (i\partial_t + \Delta) u \big\|_{L^2_{x,t}(S)}
        + \big\| e^{\beta(ct +|x|^2)} (i\partial_t + \Delta) u \big\|_{L^2_{x,t}(\mathbb{R}^{n+1}\setminus S)}.
    \end{aligned}
    $$
By \eqref{Sch}, we see that
    $$
    \begin{aligned}
    \big\| e^{\beta(ct +|x|^2)} (i\partial_t + \Delta) u \big\|_{L^2_{x,t}(S)}
    &\leq \|V\|_{L^{\infty}} \big\| e^{\beta(ct +|x|^2)} u \big\|_{L^2_{x,t}(S)}\\
        &+\big\||W|\big\|_{L^{\infty}} \big\| e^{\beta(ct +|x|^2)} |\nabla u| \big\|_{L^2_{x,t}(S)}.
    \end{aligned}
    $$
Hence if we choose $\beta$ large enough so that $\|V\|_{L^{\infty}}\leq\beta/2$ and $\big\||W|\big\|_{L^{\infty}}\leq\beta^{1/2}/2$,
we get
 $$
    \begin{aligned}
    \beta \big\| e^{\beta(ct+|x|^2)} u \big\|_{L^2_{x,t}(S)}
        + \beta^{\frac{1}{2}}&\big\| e^{\beta(ct +|x|^2)}|\nabla u| \big\|_{L^2_{x,t}(S)} \\
    &\leq2\big\| e^{\beta(ct +|x|^2)} (i\partial_t + \Delta) u \big\|_{L^2_{x,t}(\mathbb{R}^{n+1}\setminus S)}.
    \end{aligned}
    $$
Since $u$ vanishes in $\{(x,t)\in\mathbb{R}^{n+1}: ct+|x|^2 >0\}$ and $u\in H^1_t \cap H^2_x$,
we also see that
    $$
    \begin{aligned}
    \big\| e^{\beta(ct +|x|^2)} (i\partial_t + \Delta) u \big\|_{L^2(\mathbb{R}^{n+1}\setminus S)}
    &= \big\| e^{\beta(ct +|x|^2)} (i\partial_t + \Delta) u \big\|_{L^2 ( \{ (x,t): ct+|x|^2 \leq-1 \})}\\
    &\leq e^{-\beta} \big\| (i\partial_t + \Delta) u \big\|_{L^2 ( \{ (x,t): ct+|x|^2\leq-1 \} )} \\
    &\leq C e^{-\beta} .
    \end{aligned}
    $$
Hence, we get
    $$
    \beta \big\| e^{\beta(ct+|x|^2+1)} u \big\|_{L^2(S)}
        + \beta^{\frac{1}{2}} \big\| e^{\beta(ct +|x|^2+1)} |\nabla u| \big\|_{L^2(S)}
    \leq 2C.
    $$
Since $ct+|x|^2+1>0$ for $(x,t)\in S$,
    $$
    \beta \| u \|_{L^2(S)} + \beta^{\frac{1}{2}} \big\| |\nabla u|\big \|_{L^2(S)}\leq 2C.
    $$
By letting $\beta\rightarrow\infty$ we now conclude that $u=0$ on $S$.
This completes the proof.

\

\noindent\textbf{Acknowledgments.}
The authors thank the anonymous referees for many valuable suggestions which improve our presentation a great deal.




\begin{thebibliography}{9}

\bibitem{BFGRV} J. A. Barcelo, L. Fanelli, S. Gutierrez, A. Ruiz, M. C. Vilela,
\textit{Hardy uncertainty principle and unique continuation properties of covariant Schrodinger flows},
J. Funct. Anal. 264 (2013), 2386-2415.

\bibitem{C} T. Carleman, \emph{Sur un probl\`{e}me d'unicit\'{e} pour les syst\`{e}mes d'\'{e}quations
aux deriv\'{e}es partielles \`{a} deux variables ind\'{e}pendantes},
Ark. Mat., Astr. Fys. 26 (1939), 1-9.

\bibitem{CS} S. Chanillo and E. Sawyer, \emph{Unique continuation for $\Delta+v$ and the C. Fefferman-Phong class},
Trans. Amer. Math. Soc. 318 (1990), 275-300.

\bibitem{CR} F. Chiarenza and A. Ruiz, \emph{Uniform $L^2$-weighted Sobolev inequalities}, Proc. Amer. Math. Soc.
112 (1991), 53-64.

\bibitem{DS} H. Dong and W. Staubach, \textit{Unique continuation for the Schrodinger equation with gradient vector potentials}.
Proc. Amer. Math. Soc. 135 (2007), 2141-2149.

\bibitem{EKPV} L. Escauriaza, C. E. Kenig, G. Ponce and L. Vega, \textit{On uniqueness properties of solutions of Schr\"odinger equations},
Comm. Partial Differential Equations. 31 (2006), 1811-1823.

\bibitem{EKPV2} L. Escauriaza, C. E. Kenig, G. Ponce and L. Vega, \textit{Uniqueness properties of solutions to Schr\"odinger equations}, 
Bull. Amer. Math. Soc. (N.S.) 49 (2012), 415-442.

\bibitem{IK} A. D. Ionescu and C. E. Kenig, \emph{$L^p$ Carleman inequalities and uniqueness of solutions of
nonlinear Schr\"odinger equations}, Acta Math. 193 (2004), 193-239.

\bibitem{IK2} A. D. Ionescu and C. E. Kenig, \textit{Uniqueness properties of solutions of Schrodinger equations},
J. Funct. Anal. 232 (2006), 90-136.

\bibitem{I} V. Isakov, \textit{Carleman type estimates in anisotropic case and applications},
J. Diff. Eqs. 105 (1993), 217-238

\bibitem{JK} D. Jerison and C. E. Kenig, \textit{Unique continuation and absence of positive eigenvalues for
Schr\"odinger operators}, Ann. of Math. 121 (1985), 463-494.

\bibitem{KPV} C. E. Kenig, G. Ponce and L. Vega, \emph{On unique continuation for nonlinear Schr\"odinger
equations}, Comm. Pure Appl. Math. 56 (2003), 1247-1262.

\bibitem{KRS} C. E. Kenig, A. Ruiz and C. D. Sogge, \textit{Uniform Sobolev inequalities and
unique continuation for second order constant coefficient differential operators},
Duke Math. J. 55 (1987), 329-347.

\bibitem{KS} C. E. Kenig and C. D. Sogge, \emph{A note on unique continuation for Schr\"odinger's operator},
Proc. Amer. Math. Soc. 103 (1988), 543-546.

\bibitem{LS} S. Lee and I. Seo, \emph{A note on unique continuation for the Schr\"odinger equation},
J. Math. Anal. Appl. 389 (2012), 461-468.

\bibitem{LW} G. Lu and T. H. Wolff, \textit{Unique continuation with weak type lower order terms},
 Potential Anal. 7 (1997), 985-1026.

\bibitem{S} I. Seo, \emph{Unique continuation for the Schr\"odinger equation with potentials in Wiener amalgam spaces}, Indiana Univ. Math. J. 60 (2011), 1203-1227.

\bibitem{S2} I. Seo, \emph{Carleman estimates for the Schr\"odinger operator and applications to unique continuation}, Commun. Pure Appl. Anal. 11 (2012), 1013-1036.

\bibitem{S3} I. Seo, \textit{Global unique continuation from a half space for the Schr\"odinger equation}, J. Funct. Anal. 266 (2014), 85-98.

\bibitem{S4} I. Seo, \textit{Unique continuation for fractional Schr\"odinger operators in three and higher dimensions},
Proc. Amer. Math. Soc. 143 (2015), 1661-1664.

\bibitem{S5} I. Seo, \textit{Carleman inequalities for fractional Laplacians and unique continuation},
Taiwanese J. Math. 19 (2015), 1533-1540.

\bibitem{S6} I. Seo, \textit{From resolvent estimates to unique continuation for the Schr\"odinger equation},
Trans. Amer. Math. Soc., 368 (2016), 8755-8784.

\bibitem{St} E. M. Stein, \textit{Appendix to ``unique continuation''}, Ann. of Math. 121 (1985), 489-494.

\bibitem{W} T. H. Wolff, \emph{Unique continuation for $|\Delta u|\leq V|\nabla u|$ and related problems},
Rev. Mat. lberoam. 6 (1990), 155-200.

\bibitem{W2} T. H. Wolff, \textit{A property of measures in $\mathbb{R}^n$ and an application to unique continuation},
 Geom. Funct. Anal. 2 (1992), 225-284.

\bibitem{Z} B. Y. Zhang, \textit{Unique continuation properties of the nonlinear Schr\"odinger equations},
Proc. Roy. Soc. Edinburgh. 127 (1997) 191-205.

\end{thebibliography}
\end{document}